\newcommand{\eb}{\begin{equation}}
\newcommand{\ee}{\end{equation}}
\newcommand{\ebx}{\begin{equation*}}
\newcommand{\eex}{\end{equation*}}
\newtheorem{lemma}{Lemma}[section]
\newtheorem{proposition}[lemma]{Proposition}
\newtheorem{theorem}[lemma]{Theorem}
\newtheorem{corollary}[lemma]{Corollary}
\newcommand{\delete}[1]{{\color{red}\ifmmode\text{\sout{\ensuremath{#1}}}\else\sout{#1}\fi}}
\renewcommand*\env@matrix[1][*\c@MaxMatrixCols c]{%
  \hskip -\arraycolsep
  \let\@ifnextchar\new@ifnextchar
  \array{#1}}
\begin{document}

	\title{Some results related to Macaulay's Theorem about Hilbert functions and applications}
	
\author{Yun Gao\footnote{School of Mathematical Sciences, Shanghai Jiao Tong University, Shanghai, People's Republic of China. \textbf{Email:}~gaoyunmath@sjtu.edu.cn.
Supported
by NFSC, No.12471042 and No.12471078}}

	\maketitle

\begin{abstract}

Let  $I$  be a homogeneous ideal in the polynomial ring  $R = k[z_1, \cdots, z_n]$ , where  $k$  is an algebraically closed field of characteristic zero. Macaulay's Theorem provides constraints on the Hilbert function of  $I$  or $R/I$ from one degree to the next.

Nowadays, the standard quotation of Macaulay's theorem is $H_{R/I}(d + 1) \le H_{R/I}(d)^{\langle d\rangle}$, which is regarding the quotient $R/I$ and the combinatorial computation in the formula involves the number $d$ explicitly. However, the origin statement of Macaulay is in fact regarding the Hilbert function of $I$ itself and the relevant combinatorics explicitly involves the number of variables (i.e. $n$) and does not depend on $d$.

In this paper, we provide an elementary proof of the equivalence between these two versions of Macaulay's theorem.
The original degree-independent version is more suitable for problems such as those involving sums of polynomial squared norms.

Motivated by the Hermitian analogue of Hilbert's 17th problem and proper holomorphic mappings between complex unit balls,
some questions lead to the study of Hermitian polynomials  $M(z, \bar{z}) \in \mathbb{C}[z_1, \ldots, z_n, \bar{z}_1, \ldots, \bar{z}_n]$  satisfying  $M(z, \bar{z})\|z\|^{2l} = \|h\|^2$  for some  $l$  and a holomorphic mapping  $h = (h_1, \cdots, h_R)$ . 
Using Macaulay's Theorem, we derive new inequalities relating  $n$,  $l$, the signature  $(p, q)$  of the coefficient matrix of  $M(z, \bar{z})$ , and  $R$  (the rank of  $M(z, \bar{z})\|z\|^{2l}$ ) and extend these results to norms of
arbitrary signatures, which hold uniformly for all bidegrees of $M(z, \bar{z})$.

\end{abstract}
 
 {\bf Key words:} Hilbert function, graded ideal, sum of squared norms, proper mapping
 
\section{Introduction}

In the study of Hilbert functions of standard graded algebras, Macaulay's theorem \cite{Ma} holds a foundational position. 
Specifically, Macaulay's theorem characterizes the maximal possible growth of Hilbert functions from one degree to the next.

Let us first of all define a few notations. Every positive integer $A$ can be written as certain sum of binomial coefficients as follows. For every $n\in\mathbb N^+$, there exist unique positive integers $a_n>a_{n-1}>\cdots>a_\delta > 0$, where $\delta\geq 1$ and $a_j\geq j$ for every $j$, such that
$A=\binom{a_n}{n}+\cdots+\binom{a_\delta}{\delta}$. This is called the {\it $n$-th Macaulay's representation of $A$}. These representations naturally appeared in the works of Macaulay~\cite{Ma} and Green~\cite{Gr} on homogeneous ideals in polynomial rings and are used very often later in various works related Macaulay's theorem.
Let $A_{(n)}=\binom{a_n}{n}+\cdots+\binom{a_\delta}{\delta}$ be the $n$-th Macaulay's representation of $A$. For any integers $s$ and $t$, define

$$
A_{(n)}|_s^t=\binom{a_n+t}{n+s}+\cdots+\binom{a_\delta+t}{\delta+s}. 
$$

Let $k$ be a field of characteristic zero and $R=k[z_1, . . . , z_n]$ be the polynomial ring. An ideal $I\subset R$ is homogeneous if it can be generated by a set of homogeneous polynomials. Let $I \subset R$ be a homogeneous ideal. Then both $I$ and $R/I$ are graded modules over the graded ring $R$.  The Hilbert function of $I$ is defined by $H_I(d):=\dim_k I_d$, where $I_d$ is the degree-$d$ component of $I$. Similarly, the Hilbert function of $R/I$, denoted $H_{R/I}$, is
the function where $H_{R/I}(d) :=\dim_k(R/I)_d = \dim_k R_d - \dim_k I_d =\binom{n-1+d}
{d} - H_I(d)$ for $d >0$.

In all the literature within the last few decades that we have looked up, Macaulay's theorem has been always quoted as a statement regarding $H_{R/I}$, as follows (see \cite{AMS}, \cite{BGM},\cite{CM}, \cite{Gr}, etc.) using our notations.

\begin{theorem}\label{Mac}
	 (Macaulay's Theorem on $H_{R/I}$ \cite{Gr}) Let $I \subset R = k[z_1, . . . , z_n]$ be a homogeneous ideal. Then,	 
$$	
H_{R/I}(d + 1) \le H_{R/I}(d)_{(d)}|_{1}^{1}
$$
for $d \ge 0$.
\end{theorem}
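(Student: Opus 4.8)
The plan is to reduce the inequality to a purely combinatorial statement about monomial ideals, and then to lex-segment sets, where the bound can be read off directly from the Macaulay representation used to define $A_{(n)}|_s^t$. First I would replace $I$ by a monomial ideal: fixing any term order, the initial ideal $\operatorname{in}(I)$ has the same Hilbert function as $I$ in every degree, since the standard monomials form a $k$-basis of $R/I$ degree by degree; hence $H_{R/I}=H_{R/\operatorname{in}(I)}$ and it suffices to treat a monomial ideal $J$. For such a $J$ one always has $R_1\cdot J_d\subseteq J_{d+1}$, so $\dim_k J_{d+1}\ge\dim_k(R_1\cdot J_d)$ and therefore $H_{R/J}(d+1)\le\binom{n+d}{d+1}-\dim_k(R_1\cdot J_d)$. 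Thus maximizing the growth of the quotient amounts to minimizing the size of the shadow $R_1\cdot J_d$ over all sets of $\dim_k J_d$ monomials of degree $d$.

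Second, I would invoke Macaulay's combinatorial lemma: among all collections of a fixed number of degree-$d$ monomials, the lex-largest collection minimizes the shadow $R_1\cdot(-)$. The standard route is a compression argument that, working one variable at a time, replaces $J_d$ by a lex-more-initial set of equal cardinality without enlarging its shadow. Proving this monotonicity --- that each elementary compression never increases the shadow --- is the combinatorial heart of the theorem (the Kruskal--Katona/Macaulay phenomenon), and it is the step I expect to be the main obstacle; essentially all the real work lies here, the rest being formal.

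Finally, evaluating the minimal shadow on the lex configuration yields the stated bound by an explicit binomial computation governed by the Macaulay representation. Writing $A=H_{R/J}(d)$ in its $d$-th Macaulay representation $A=\binom{a_d}{d}+\cdots+\binom{a_\delta}{\delta}$ and counting the degree-$(d+1)$ monomials lying outside the lex ideal produces exactly $\binom{a_d+1}{d+1}+\cdots+\binom{a_\delta+1}{\delta+1}=A_{(d)}|_1^1$, so that $H_{R/J}(d+1)\le A_{(d)}|_1^1$; matching the summands to the Macaulay representation is a routine induction once the lex structure is in place, which completes the proof. An alternative, closer to Green's original treatment in \cite{Gr}, replaces the compression step by a generic hyperplane section and induction on $n$, but the combinatorial extremality resurfaces there in the guise of the hyperplane restriction theorem.
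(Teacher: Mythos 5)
The paper itself contains no proof of this statement: Theorem \ref{Mac} is quoted from Green \cite{Gr} as the standard modern formulation of Macaulay's theorem and is used as a black box (its only role in the paper is as the input to the short derivation of Theorem \ref{M2} via Lemma \ref{binom}). So there is no in-paper argument to compare yours against; what can be assessed is whether your outline would constitute a valid proof on its own. The route you describe is the standard one and its reductions are sound: passing to $\operatorname{in}(I)$ preserves the Hilbert function, $R_1\cdot J_d\subseteq J_{d+1}$ correctly converts the problem into minimizing the shadow of a set of $\dim_k J_d$ degree-$d$ monomials, and the complement count on the lex configuration does yield $\binom{a_d+1}{d+1}+\cdots+\binom{a_\delta+1}{\delta+1}$, which is exactly the paper's operator $A_{(d)}|_1^1$ applied to $A=H_{R/I}(d)$. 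The gap is the one you name yourself: the assertion that lex segments minimize shadows (equivalently, that elementary compressions never enlarge the shadow) is the entire content of Macaulay's theorem, and your proposal states it as a lemma to be invoked rather than proving it. As a self-contained proof the attempt is therefore incomplete at its central step; as a reduction of Theorem \ref{Mac} to a correctly identified and genuinely standard combinatorial lemma (Macaulay/Kruskal--Katona, or Green's hyperplane restriction theorem in the alternative you mention), it is accurate and consistent with how the literature --- and this paper --- actually treats the result.
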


Thus, the Macaulay's theorem above regulates the possible growth of Hilbert function of $R/I$ from one degree to the next. Moreover, the combinatorial computation at each degree $d$ involves explicitly the integer $d$ as the $d$-th Macaulay 's representation of $H_{R/I}(d)$ is used. 

Motivated by the problem of proper holomorphic mappings in Several Complex Variables, the author and Ng proved in~\cite{GN3} a combinatorial identity related to the Macaulay's representations and the operators $A_{(n)}|^t_s$ defined above. From this identity the author successfully converted the Macaulay's theorem above on $H_{R/I}$ to a statement regarding the Hilbert function $H_I$ for $I$ itself (see Theorem~\ref{M2} below), and the combinatorial computation at each degree only involves explicitly the integer $n$ (the number of variables), but not $d$ (the degree). Initially, we believed we had discovered a new
formulation of Macaulay's theorem.

However, after looking up Macaulay's original article~\cite{Ma}, we found that our ``new'' version of Macaulay's theorem is in fact his {\it original} statement in~\cite{Ma}. Actually, in his paper Macaulay did not write his result in a usual theorem form but one can still read from page 537 of~\cite{Ma} that his original statement is just Theorem~\ref{M2} below. Being puzzled by this discrepancy between Macaulay's original statement of his theorem and its modern quotations by mathematicians (i.e. Theorem~\ref{Mac}), we had  tried quite hard to dig up the literature to see how his theorem changed to the modern version. However, we are unable to find any reference in which both versions of the theorem appear. Around the period 1920s-1930s, the original version was still cited by mathematicians in the literature, e.g. in ~\cite{Sp, Wh}. On the other hand, since the late 1970s, the theorem started to be quoted as a version similar to Theorem~\ref{Mac}. For the period from the 1930s to the early 1970s, there seems to be no obtainable literature related to Macaulay's theorem. Then, the combinatorialists were among the first group of mathematicians citing and studying the generalizations of Macaulay's theorem after the aforementioned blank period of citations. They proved various combinatorial generalizations of Macaulay's theorem (see~\cite{CL,Cl}). In addition, they linked the combinatorial problems of finite sets, simplicial complexes, etc. to the study of Hilbert functions of standard graded algebras~\cite{St1, St2}. We tend to believe that the combinatorialists were the ones who changed the original Macaulay's theorem on $H_I$ to a theorem on $H_{R/I}$. Most notably, we found that in~\cite{St3}, R. Stanley wrote ``{\it The explicit numerical form in which we state the result first appeared in~\cite{St1} and is also considered in~\cite{St2}"}. 

As mentioned, there seems to be no available literature explicitly relating the two versions of Macaulay's theorem. We deem it useful to give the detail of this linkage in this article. Moreover, due to the fact that the combinatorial computation involved in the original version of Macaulay's theorem only explicitly depends on the number of variables but not the degree, we found its applications in tackling certain algebraic problems, such as the problem of representing polynomials as sums of squared norms (see Section \ref{GSOS}).

For a Hermitian polynomial  $M(z,\bar{z}) \in \mathbb{C}[z_1,\ldots,z_n,\bar{z}_1,\ldots,\bar{z}_n]$  (i.e., a polynomial that takes real values), the question of whether  $M(z,\bar{z})\|z\|^2$  can be expressed as a sum of squared norms of holomorphic polynomials has attracted considerable interest. This problem is partly motivated by the Hermitian analogue of Hilbert’s 17th problem, the classical sums of squares (SOS) problem, and also frequently arises in the study of proper holomorphic mappings between complex unit balls. For further context on these connections, we refer readers to \cite{Da}, \cite{CD1}, \cite{CD2}, \cite{GLV}, and related works.

A foundational result, independently established by Quillen \cite{Qu} and later by Catlin and D'Angelo \cite{CD2}, asserts that if  $M(z,\bar{z})$  is strictly positive on the unit sphere  $S^{2n-1}$ , then there exists a natural number  $l$  such that  $M(z,\bar{z})\|z\|^{2l}$  admits a representation as a sum of squared norms:  
\begin{eqnarray}\label{S}
	M(z,\bar{z})\|z\|^{2l}=\sum_{i=1}^{R}|h_i(z)|^2=\|h(z)\|^2,
\end{eqnarray}  
where the  $h_i(z)$  are linearly independent holomorphic polynomials on  $\mathbb{C}^n$ .

This result naturally raises two core questions: (1) What conditions on  $M(z,\bar{z})$  guarantee such a representation? (2) What is the minimal  $l$  for which  $M(z,\bar{z})\|z\|^{2l}$  can be written as a sum of squared norms? Prior works have addressed these questions: \cite{DZ} and \cite{TY} derived effective bounds on  $l$  under general additional assumptions, while \cite{BG2} provided algebraic conditions (in terms of Betti numbers) for the existence of such  $l$ .

Paralleling interest in Hilbert's 17th problem, mathematicians have also focused on the rank  $R$  of the representation in Equation (\ref{S}), defined as the dimension of the vector space spanned by the  $h_i(z)$ . A key result involving this rank, known as “Huang's Lemma,” finds widespread use in CR geometry (see, e.g., \cite{HJY2}, \cite{EHZ}, \cite{Xiao} and references therein).

More recently, Ebenfelt \cite{Eb} posed the SOS conjecture regarding the possible rank of  $M(z,\bar{z})\|z\|^2$  as  $M(z,\bar{z})$  varies. In \cite{HL} and \cite{GK}, the authors derived conditions linking the signature  $(p, q)$  of  $M(z,\bar{z})$, the dimension  $n$, and the rank  $R$  of  $M(z,\bar{z})\|z\|^{2l}$  to the representability of  $M(z,\bar{z})\|z\|^{2l}$  as a sum of squared norms. Only some special cases of this conjecture have been solved (see the relevant sections in those works).

 Building on SOS conjecture, the authors of \cite{GN2} generalized the conjecture to arbitrary signatures: specifically, they considered representations of the form 
$$M(z,\bar{z})\|z\|_{s,t}^2 = |h_1|^2 + \cdots + |h_{s'}|^2 - |h_{s'+1}|^2 - \cdots - |h_{s'+t'}|^2,$$
which correspond to proper mappings  $h = [h_1, \cdots, h_{s'+t'}]$  between generalized balls. Crucially, classifying such “multipliers”  $M(z,\bar{z})$  enables the classification of the associated proper mappings (see \cite{GN1}, \cite{DL}, etc.). Thus, investigating the relationships between the rank of  $M(z,\bar{z})$, the dimension  $n$  of  $\mathbb{C}^n$, and the rank  $R$  of  $M(z,\bar{z})\|z\|_{s,t}^{2}$  emerges as a natural direction.

These prior works have directly informed our research. In this paper, we establish some new inequalities relating  $n$,  $l$,  $p$,  $q$, and  $R$  and extend the results to arbitrary signatures by leveraging the degree-independent formulation of Macaulay’s theorem. All inequalities hold uniformly for all bidegrees of $M(z, \bar{z})$

\section{
The oringal Macaulay's theorem on $H_I$
}

We now state a key combinatorial lemma relating the operations
defined above, which thus connects Macaulay's  theorem on $H_{R/I}$ and the original Macaulay's Theorem on $H_I$, i.e. Theorem~\ref{M2}.
To streamline the presentation, we defer its proof to the Section \ref{App}.

\begin{lemma}\label{binom}
	Suppose integers $m,d,s\geq 1$ and $A,B\geq 0$. If $A+B=\binom{m+d}{d}$, then $$A_{(m)}|_0^{s}+B_{(d)}|^{s}_{s}=\binom{m+d+s}{d+s}.$$
\end{lemma}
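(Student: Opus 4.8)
The plan is to embed the desired identity into a two-parameter family and then collapse that family by Vandermonde's identity. Recall that the operator $A_{(m)}|_s^t=\sum_j\binom{a_j+t}{j+s}$ is defined term-by-term for \emph{all} integers $s,t$, with the convention that binomial coefficients having negative lower index vanish; I will use it with negative subscripts. Fixing $A+B=\binom{m+d}{d}=\binom{m+d}{m}$ throughout, I would set
$$f(r,t):=A_{(m)}|_{-r}^{\,t}+B_{(d)}|_{\,t+r}^{\,t}\qquad(r,t\ge 0),$$
so that the quantity to be computed is precisely $f(0,s)=A_{(m)}|_0^s+B_{(d)}|_s^s$. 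Applying Pascal's rule $\binom{x+1}{y}=\binom{x}{y}+\binom{x}{y-1}$ to each binomial coefficient occurring in $f(r,t+1)$, every term splits into one term of $f(r,t)$ and one term of $f(r+1,t)$, which yields the recursion $f(r,t+1)=f(r,t)+f(r+1,t)$. Iterating this recursion in $t$ gives $f(0,s)=\sum_{r=0}^{s}\binom{s}{r}f(r,0)$, so the whole statement reduces to evaluating the base values $f(r,0)$.

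The claim I would then prove is the static identity $f(r,0)=\binom{m+d}{m-r}$ for all $r\ge 0$; granting it, Vandermonde's identity gives $f(0,s)=\sum_{r}\binom{s}{r}\binom{m+d}{m-r}=\binom{m+d+s}{m}=\binom{m+d+s}{d+s}$, which is the assertion. For $r=0$ this base identity is exactly the hypothesis $A+B=\binom{m+d}{m}$, so the genuine content is the case $r\ge 1$, namely
$$\textstyle\sum_j\binom{a_j}{j-r}+\sum_i\binom{b_i}{i+r}=\binom{m+d}{m-r},$$
a relation between the $m$-th Macaulay representation of $A$ and the $d$-th Macaulay representation of its complement $B$. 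I would establish this by induction on $m+d$, splitting according to where $A$ lies relative to $\binom{m+d-1}{m}$. If $A\le\binom{m+d-1}{m}$, then the leading term of the $d$-th representation of $B$ is exactly $\binom{m+d-1}{d}$, and peeling it off leaves a complementary pair $(A,B')$ for the parameters $(m,d-1)$; the inductive hypothesis together with the single Pascal step $\binom{m+d-1}{m-r}+\binom{m+d-1}{m-1-r}=\binom{m+d}{m-r}$ closes this case. If $A>\binom{m+d-1}{m}$, the symmetric argument peels the leading term $\binom{m+d-1}{m}$ off the $m$-th representation of $A$ and reduces to the parameters $(m-1,d)$.

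The main obstacle is precisely this base case. The two operators $|_0^s$ and $|_s^s$ act on two \emph{different} Macaulay representations (that of $A$ in $m$ parts and that of $B$ in $d$ parts), and these are not matched by any naive bijection of their ``tops'': a colex-successor, off-by-one discrepancy blocks a direct duality between $\{a_j\}$ and $\{b_i\}$. The induction on $m+d$ is designed to sidestep this entirely --- it never compares the two representations directly, but instead strips a single leading binomial from whichever of $A$ or $B$ governs the largest term and lets Pascal's rule recombine the two halves. I would dispatch the boundary instances ($A=0$, $B=0$, and small $m$ or $d$) separately, where both representations collapse to a single term and the identity $\binom{m+d+s}{m}=\binom{m+d+s}{d+s}$ is immediate. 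Once the base case is secured, the Pascal recursion and the Vandermonde collapse are purely formal, so essentially all the real combinatorics is concentrated in this one inductive step.
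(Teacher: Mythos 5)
Your argument is correct, and its combinatorial core coincides with the paper's: the identity $f(r,0)=\binom{m+d}{m-r}$ is proved by exactly the peeling induction the paper uses for its $s=1$ case, namely splitting according to whether $A$ absorbs the leading binomial $\binom{m+d-1}{m}$ or $B$ absorbs $\binom{m+d-1}{d}$, stripping that term (which leaves a complementary pair for $(m-1,d)$ or $(m,d-1)$), and recombining with Pascal's rule; the boundary cases $A=0$, $B=0$ and $m=1$ or $d=1$ are likewise checked separately in both proofs. Where you genuinely diverge is in how the parameter $s$ is handled. The paper proves the $s=1$ instance of the stated identity and then asserts the general case "by induction" on $s$; making that step explicit requires observing that $A_{(m)}|_0^{s}$ and $B_{(d)}|_{s}^{s}$ are again Macaulay representations (in parameters $m$ and $d+s$) so that the $s=1$ case can be re-applied. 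Your route instead encodes all shifts at once in the two-parameter family $f(r,t)=A_{(m)}|_{-r}^{t}+B_{(d)}|_{t+r}^{t}$, whose Pascal recursion $f(r,t+1)=f(r,t)+f(r+1,t)$ reduces everything to the unshifted base values $f(r,0)$, which are then assembled by Vandermonde's identity $\sum_r\binom{s}{r}\binom{m+d}{m-r}=\binom{m+d+s}{m}$. This buys you a proof in which the Macaulay representations of $A$ and $B$ are consulted only once (at $t=0$), at the modest cost of introducing the negative-subscript operators $|_{-r}^{t}$ under the standard convention that binomial coefficients with out-of-range lower index vanish. I verified the recursion, the Vandermonde collapse, and the inductive step for $f(r,0)$ (including the $d=1$ base case via the hockey-stick identity); the proposal is complete once those routine boundary checks are written out.
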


We can now state Macaulay's original theorem on $H_I$ using our notations:

\begin{theorem}\label{M2}(\cite{Ma}P537)
	Let $I \subset R = k[z_1, . . . , z_n]$ be a homogeneous ideal. Then
	$$H_I(d+1)\ge H_I(d)_{(n-1)}|_{0}^{1}$$
\end{theorem}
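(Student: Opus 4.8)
The plan is to deduce Theorem~\ref{M2} from Theorem~\ref{Mac} using the combinatorial bridge provided by Lemma~\ref{binom}. The two theorems are statements about the same ideal $I$, just phrased through different Hilbert functions linked by the identity $H_I(d)+H_{R/I}(d)=\binom{n-1+d}{d}$. So the whole matter should reduce to translating the inequality $H_{R/I}(d+1)\le H_{R/I}(d)_{(d)}|_1^1$ into an inequality on $H_I$ by substituting $H_{R/I}=\binom{n-1+d}{d}-H_I$ and rewriting the Macaulay-representation operators accordingly. The role of Lemma~\ref{binom} is precisely to handle this substitution: it tells us how the operator $|_1^1$ applied to a sum decomposes, which is exactly what is needed when $H_{R/I}(d)$ and $H_I(d)$ add up to a binomial coefficient.

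First I would set $m=n-1$ and $d$ as in the statement, and write $A=H_I(d)$, $B=H_{R/I}(d)$, so that $A+B=\binom{n-1+d}{d}=\binom{m+d}{d}$, matching the hypothesis of Lemma~\ref{binom}. Applying the lemma with $s=1$ yields
$$A_{(n-1)}|_0^{1}+B_{(d)}|_1^{1}=\binom{n-1+d+1}{d+1}=\binom{n+d}{d+1},$$
which is exactly $\dim_k R_{d+1}=\binom{n-1+(d+1)}{d+1}$, the total dimension in degree $d+1$. Next I would invoke Theorem~\ref{Mac} in the form $H_{R/I}(d+1)\le B_{(d)}|_1^1$, i.e. $H_{R/I}(d+1)\le \binom{n+d}{d+1}-A_{(n-1)}|_0^1$. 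Finally, rewriting the left side via $H_{R/I}(d+1)=\binom{n+d}{d+1}-H_I(d+1)$ and cancelling the common binomial term gives $H_I(d+1)\ge A_{(n-1)}|_0^1=H_I(d)_{(n-1)}|_0^1$, which is the desired conclusion.

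The main obstacle I anticipate is purely bookkeeping rather than conceptual: one must verify that the Macaulay representations are being taken with respect to the correct index at each step, since the operator $|_s^t$ acts on a fixed representation ($n$-th, $d$-th, etc.) and the two theorems use different indices ($n-1$ versus $d$). In particular I would need to confirm that Lemma~\ref{binom}'s hypotheses ($m,d,s\ge 1$ and $A,B\ge 0$) are genuinely met, which requires a short separate check of the boundary cases, notably $d=0$ where $H_I(0)=0$ and the degree-$d$ representation of $B$ must be interpreted correctly, and the possibility that $A=0$ (i.e. $I_d=0$), for which the Macaulay-representation operator applied to $0$ should be read as yielding $0$. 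Once these edge cases are dispatched, the equivalence is immediate from the lemma, and the same argument run in reverse establishes that Theorem~\ref{M2} implies Theorem~\ref{Mac}, confirming the claimed equivalence of the two formulations.
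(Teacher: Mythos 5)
Your proposal is correct and follows essentially the same route as the paper: both deduce Theorem~\ref{M2} from Theorem~\ref{Mac} by setting $A=H_I(d)$, $B=H_{R/I}(d)$ with $A+B=\binom{n-1+d}{d}$, applying Lemma~\ref{binom} with $m=n-1$, $s=1$, and cancelling the common binomial term. Your treatment is in fact slightly more careful than the paper's, which writes $H_I(d)_{(n)}|_0^1$ where $H_I(d)_{(n-1)}|_0^1$ is meant and does not spell out the $d=0$ and $A=0$ edge cases that you flag.
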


\begin{proof}
For $d>0$, $H_{R/I}(d) +H_I(d)=\dim_k(R/I)_d +\dim_k I_d= \dim_k R_d  =\binom{n-1+d}{d}$.  By Lemma \ref{binom} 
$$H_{I}(d)_{(n)}|^{1}_{0}+H_{R/I}(d)_{(d)}|^{1}_{1}=\binom{n+d}{d+1}.$$

Theorem \ref{Mac} implies $H_{R/I}(d + 1) \le H_{R/I}(d)_{(d)}|_{1}^{1}$. Hence $H_I(d+1)\ge H_I(d)_{(n-1)}|_{0}^{1}$

\end{proof}

A simple calculation gives the following corollary which is weaker than Theorem \ref{M2}.

\begin{corollary}
Let $I \subset R = k[z_1, . . . , x_n]$ be a homogeneous ideal. Then
$$H_I(d)\le H_I(d+1)_{(n-1)}|_{0}^{-1}$$

\end{corollary}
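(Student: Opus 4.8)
The plan is to obtain the corollary by applying the operation $(\cdot)_{(n-1)}|_0^{-1}$ to both sides of the inequality in Theorem~\ref{M2}. Write $\phi(A):=A_{(n-1)}|_0^{1}$ and $\psi(B):=B_{(n-1)}|_0^{-1}$, regarded as maps on the nonnegative integers (with $\phi(0)=\psi(0)=0$). Theorem~\ref{M2} reads $H_I(d+1)\ge \phi\bigl(H_I(d)\bigr)$, so it suffices to prove two facts: that $\psi$ is non-decreasing, and that $\psi\bigl(\phi(A)\bigr)=A$ for every $A\ge 0$. Granting these, applying the monotone map $\psi$ to $H_I(d+1)\ge\phi\bigl(H_I(d)\bigr)$ gives $H_I(d+1)_{(n-1)}|_0^{-1}=\psi\bigl(H_I(d+1)\bigr)\ge\psi\bigl(\phi(H_I(d))\bigr)=H_I(d)$, which is exactly the asserted bound. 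The case $H_I(d)=0$ is handled separately and trivially, since the right-hand side is a sum of binomial coefficients and hence nonnegative; thus one may assume $A=H_I(d)\ge 1$ so that its Macaulay representation is defined.

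First I would verify the identity $\psi\circ\phi=\mathrm{id}$. If $A=\binom{a_{n-1}}{n-1}+\cdots+\binom{a_\delta}{\delta}$ is the $(n-1)$-th Macaulay representation of $A$, then by definition $\phi(A)=\binom{a_{n-1}+1}{n-1}+\cdots+\binom{a_\delta+1}{\delta}$. The key observation is that this expression is again a genuine $(n-1)$-th Macaulay representation: adding $1$ to every top entry preserves the strict inequalities $a_{n-1}>\cdots>a_\delta$ as well as the constraints $a_j\ge j$ (indeed $a_j+1>a_j\ge j$), while the lower indices $n-1,\ldots,\delta$ are left unchanged because the operator has index-shift $s=0$. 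By the uniqueness of the Macaulay representation, $\binom{a_{n-1}+1}{n-1}+\cdots+\binom{a_\delta+1}{\delta}$ is \emph{the} representation of $\phi(A)$, and applying $|_0^{-1}$ subtracts the $1$ back from each top entry, returning $A$.

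The main obstacle is the monotonicity of $\psi$, i.e.\ $B_1\le B_2\Rightarrow\psi(B_1)\le\psi(B_2)$; this is the only step that is not purely formal, since the Macaulay representations of $B_1$ and $B_2$ bear no evident relation to one another. I would reduce to the consecutive case $B_2=B_1+1$ and track how the greedy (largest-binomial-first) construction of the $(n-1)$-th representation updates when the represented integer increases by one, checking in both the ``no-carry'' and ``carry'' subcases that the resulting change in $\sum_j\binom{a_j-1}{j}$ is nonnegative. Alternatively, one can package this as the single characterization $\psi(B)=\max\{A\ge 0:\phi(A)\le B\}$, from which monotonicity is immediate because the set on the right grows with $B$, and which simultaneously re-proves $\psi\bigl(\phi(A)\bigr)=A$ since $\phi$ is strictly increasing. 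Either route rests only on the standard combinatorics of Macaulay representations already in play, and once monotonicity is in hand the passage from Theorem~\ref{M2} to the corollary is precisely the ``simple calculation'' indicated in the statement.
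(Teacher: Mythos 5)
The paper offers no argument for this corollary beyond the remark that ``a simple calculation'' derives it from Theorem~\ref{M2}, so there is no official proof to compare against; your route --- apply the inverse operator $\psi(B)=B_{(n-1)}|_0^{-1}$ to both sides of $H_I(d+1)\ge H_I(d)_{(n-1)}|_0^{1}$ --- is surely the intended one. Your verification that $\psi\circ\phi=\mathrm{id}$ is complete and correct: since $|_0^{1}$ shifts only the top entries, $\sum_j\binom{a_j+1}{j}$ still satisfies the strict decrease and $a_j+1\ge j$ constraints, so by uniqueness it \emph{is} the Macaulay representation of $\phi(A)$, and $|_0^{-1}$ undoes it.

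The genuine gap is the monotonicity of $\psi$, which you correctly identify as the crux but do not actually prove, and your second proposed shortcut is circular. Defining $\tilde\psi(B):=\max\{A\ge 0:\phi(A)\le B\}$ does make monotonicity and $\tilde\psi(\phi(A))=A$ immediate, but the corollary then needs $\tilde\psi(B)\le\psi(B)$, i.e.\ precisely the implication $\phi(A)\le B\Rightarrow A\le B_{(n-1)}|_0^{-1}$ that you set out to establish; deducing $\tilde\psi\le\psi$ from $\psi\circ\phi=\mathrm{id}$ again invokes monotonicity of $\psi$. (The easy direction, $\phi(\psi(B))\le B$, does follow from the representation: if $b_j=j$ for some $j$ then $b_i=i$ for all $i\le j$, so the vanishing terms $\binom{b_j-1}{j}$ sit at the bottom and the surviving terms form a legitimate Macaulay representation of $\psi(B)$ --- but this gives only one of the two inequalities needed.) So you must carry out your first route: reduce to $B_2=B_1+1$ and check, in both the no-carry and the carry case of the greedy construction, that $\sum_j\binom{b_j-1}{j}$ does not decrease. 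This monotonicity of Macaulay-type operators is a standard lemma (it appears, e.g., in Green's treatment), but as written your proposal asserts rather than proves it, and that assertion carries the entire weight of the corollary.
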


\section{Application to Generalized SOS problem}\label{GSOS}

Denote $\|z\|$ as the Euclidean norm of $z \in \mathbb{C}^n$. For a Hermitian polynomial $M(z,\bar{z}) \in \mathbb{C}[z_1,\ldots,z_n,\bar{z}_1,\ldots,\bar{z}_n]$ (real-valued), the problem of whether $M(z,\bar{z})\|z\|^2$ can be expressed as a sum of norm squares of polynomials in $z$ (the \textit{sums of squares (SOS) problem}) has drawn significant attention, motivated by Hermitian analogues of Hilbert's 17th problem and proper holomorphic map studies (see \cite{Da,CD1,CD2}).  

Recently, Ebenfelt \cite{Eb} posed a related problem on the \textit{rank} of $M(z,\bar{z})\|z\|^2$: if expressible as a sum of norm squares, its rank is the dimension of the spanning vector space (equivalently, the minimal number of such polynomials). Inspired by Huang's Lemma \cite{Hu} and the Huang-Ji-Yin \textit{Gap Conjecture} \cite{HJY}, Ebenfelt proposed the \textit{SOS Conjecture} on possible ranks, showing the Gap Conjecture follows from it via a CR Gauss equation.  

Evidence for the SOS Conjecture includes monomial cases in $\mathbb{C}^3$ in \cite{BG}, $M(z,\bar{z})$ as SOS in \cite{GK}, and some cases in \cite{BGS} and \cite{WYZ}. 

In \cite{GN2}, the authors generalized the SOS conjecture to arbitrary signature and proved that the ranks of the Hermitian polynomials of the form  $M(z,\bar{z})\|z\|_{s,t}^2$ can only lie in certain disjoint intervals on the real line.

Let $ s, t\in \{0, \ldots, n\} $ with $(s, t) \neq (0, 0) $ and $s + t  = n $. Consider a Hermitian form on $\mathbb{C}^n $ (which may be indefinite or degenerate) defined by  
\[
\langle z, w \rangle_{s,t} := z_1 \overline{w}_1 + \cdots + z_s \overline{w}_s - z_{s+1} \overline{w}_{s+1} - \cdots - z_{s+t} \overline{w}_{s+t}
\]  
for $z, w \in \mathbb{C}^n $. This form $\langle \cdot, \cdot \rangle_{s,t} $ has signature $(s, t) $. We denote $\langle z, z \rangle_{s,t} $ by $\|z\|^2_{s,t} $. In general, we use $\langle \cdot, \cdot \rangle_{\star} $ to denote a Hermitian form on $\mathbb{C}^n $.

Let $M(z,\bar z)\in\mathbb C[z_1,\ldots,z_{n},\bar z_1,\ldots,\bar z_{n}]$ be a non-zero Hermitian \textit{bihomogeneous} polynomial with degree $(d,d)$. Then
there exists a Hermitian matrix $\mathcal M$ such that
	$$M(z,\bar{z}) = Z_d^H \mathcal M Z_d,$$
	where $Z_d$ is the vector of monomials on $\mathbb C^n$ of degree $d$. The rank $r$ of $\mathcal M$ is known as the rank of $M(z,\bar{z})$. Moreover, there exist linearly independent holomorphic functions $m_1(z), \ldots, m_{p+q}(z)$ such that
	$$M(z,\bar{z}) = |m_1(z)|^2 + \cdots + |m_p(z)|^2 - |m_{p+1}(z)|^2 + \cdots - |m_{p+q}(z)|^2.$$
	
	While $m_1(z), \ldots, m_{p+q}(z)$ are not unique, the signature $(p,q)$ and the rank $r(\mathcal M) = p+q$ of $M(z,\bar{z})$ are unique. 
	
	Now we consider the polynomial 
	$$F(z,\bar z)=M(z,\bar{z})\|z\|^2_{s,t}.$$
	Then $F(z,\bar z)$ is a homogeneous Hermitian polynomial of bidegree $(d+1,d+1)$ on $\mathbb{C}^n$ and there exists a matrix $\mathcal F$ such that 
	$$F(z,\bar z)=Z_{d+1}^H\mathcal F Z_{d+1}.$$
		If the matrix $\mathcal F$ has $s'$ positive eigenvalues, $t'$ negative eigenvalues, then there exist $s'+t'$ polynomials $f_1(z)$,$\cdots$,$f_{s'+t'}(z)$ determined by the eigenvectors of $\mathcal F$ such that 
	$$M(z,\bar z)\|z\|_{s,t}=|f_1|^2+\cdots+|f_{s}|^2-|f_{s'+1}|^2-\cdots-|f_{s'+t'}|^2.$$	
	Then $f=[f_1,\cdots,f_{s'+t'}]$ can define an proper mapping between generalized balls.  Consequently, the results concerning the aforementioned equations can be applied to study the classification of proper mappings between generalized balls. This idea can also be found in \cite{DL}.
	
	 Now we give the estimate of the rank $R$ of $F$ from the rank $r$ of $M(z,\bar z)$.

\begin{theorem}\label{rank}
	Let  $M(z,\bar{z})$ be a bihomogeneous Hermitian polynomial with rank $r$ on $\mathbb{C}^n$, 
	and let $R$ be the rank of $M(z,\bar{z})\|z\|^2_{\star}$
 Then 
	$$2r_{(n-1)}|^{1}_0-rn\le R \le rn.$$
\end{theorem}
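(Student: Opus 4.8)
The plan is to convert the rank of $F=M(z,\bar z)\|z\|^2_{\star}$ into the dimension of a span of product polynomials and then feed that dimension into Theorem~\ref{M2}. Since the relevant form has signature $(s,t)$ with $s+t=n$, I may assume $\|z\|^2_{\star}$ is nondegenerate and, after a linear change of coordinates, equals $\sum_{j\le s}|z_j|^2-\sum_{j>s}|z_j|^2$. Writing $M=\sum_{i\le p}|m_i|^2-\sum_{i>p}|m_i|^2$ with the $m_i$ linearly independent of degree $d$ and $r=p+q$, I expand
$$F=\sum_{i,j}\varepsilon_i\eta_j\,|m_iz_j|^2,\qquad \varepsilon_i,\eta_j\in\{+1,-1\}.$$
Passing to coefficient vectors in $R_{d+1}\cong\mathbb{C}^N$, each $|m_iz_j|^2$ becomes a rank-one Hermitian matrix, so $\mathcal F=\mathcal P-\mathcal N$ with $\mathcal P,\mathcal N$ positive semidefinite, where $\mathcal P$ is supported on $U_+:=\mathrm{span}\{m_iz_j:\varepsilon_i\eta_j=+1\}$ and $\mathcal N$ on $U_-:=\mathrm{span}\{m_iz_j:\varepsilon_i\eta_j=-1\}$. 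I set $R_\pm=\dim U_\pm$, $D=\dim(U_++U_-)$, and $V=\mathrm{span}\{m_1,\dots,m_r\}$, so that $U_++U_-=V\cdot R_1$ because all $n$ variables occur.

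The upper bound is immediate: the positive products number at most $ps+qt$ and the negative ones at most $pt+qs$, so
$$R=\mathrm{rank}(\mathcal P-\mathcal N)\le \mathrm{rank}\,\mathcal P+\mathrm{rank}\,\mathcal N=R_++R_-\le (p+q)(s+t)=rn.$$

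For the lower bound the first step is a rank inequality controlling the cancellation. Set $W:=U_++U_-$; since $\mathrm{range}\,\mathcal F\subseteq W$ and $W^\perp\subseteq\ker\mathcal F$, one has $R=D-\dim\ker(\mathcal F|_W)$. I claim $\dim\ker(\mathcal F|_W)\le\dim(U_+\cap U_-)$. Indeed, if $x\in W$ and $\mathcal Fx=0$ then $\mathcal Px=\mathcal Nx\in U_+\cap U_-$, and the map $x\mapsto\mathcal Px$ is injective on $\ker(\mathcal F|_W)$: its kernel consists of $x\in W$ with $\mathcal Px=\mathcal Nx=0$, i.e. $x\perp U_+$ and $x\perp U_-$, hence $x\in W\cap(U_++U_-)^\perp=W\cap W^\perp=\{0\}$ for the standard positive definite Hermitian product. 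Therefore
$$R=D-\dim\ker(\mathcal F|_W)\ge D-\dim(U_+\cap U_-)=2D-(R_++R_-).$$

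It remains to combine this with $R_++R_-\le rn$ from the upper-bound step and with a lower bound on $D$, which is exactly where Theorem~\ref{M2} enters. Let $I=(V)$ be the homogeneous ideal generated by $V$; then $I_d=V$ and $I_{d+1}=R_1\cdot V=U_++U_-$, so $H_I(d)=r$ and $H_I(d+1)=D$, and Theorem~\ref{M2} gives $D=H_I(d+1)\ge H_I(d)_{(n-1)}|_0^1=r_{(n-1)}|_0^1$. Substituting yields $R\ge 2D-(R_++R_-)\ge 2\,r_{(n-1)}|_0^1-rn$, as desired. The hard part will be the lower bound, precisely because the cross terms $m_iz_j$ appear with both signs and cancel, making $R$ strictly smaller than $D$ in general; the crux is the kernel-injection argument that quantifies this loss by $\dim(U_+\cap U_-)$, together with the observation that $D$ is the Hilbert value $H_{(V)}(d+1)$ so that the degree-independent Macaulay bound applies uniformly in the bidegree.
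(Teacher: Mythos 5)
Your proof is correct and follows essentially the same route as the paper: both expand $M(z,\bar z)\|z\|^2_{\star}$ as a signed sum of the $rn$ products $|m_iz_j|^2$, identify their span with $I_{d+1}$ so that Theorem~\ref{M2} yields $\dim I_{d+1}\ge r_{(n-1)}|_0^1$, and then control the rank loss by linear algebra. The only difference is in packaging: where the paper applies Sylvester's rank inequality to get $\mathrm{rank}(A^H\Pi A)\ge 2\,\mathrm{rank}(A)-rn$, you obtain the equivalent estimate by injecting $\ker(\mathcal F|_W)$ into $U_+\cap U_-$ (your intermediate bound $R\ge 2D-(R_++R_-)$ is in fact marginally sharper, but reduces to the same stated inequality after bounding $R_++R_-$ by $rn$).
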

\begin{proof}
	Assume that $\|z\|^2_{\star}=|z_1|^2+\cdots+|z_s|^2-|z_{s+1}|^2-\cdots-|z_{s+t}|^2$ for some non-negative integers with $s+t=n$.
Then $$F(z,\bar z)=\sum_{j = 1}^s\sum_{i=1}^{p}|m_i(z)z_j|^2-\sum_{j = s+1}^{s+t}\sum_{i=1}^{p}|m_i(z)z_{j}|^2+\sum_{j = s+1}^{s+t}\sum_{i=1}^{p}|m_{i}(z)z_{j}|^2-\sum_{j = 1}^s\sum_{i=p+1}^{p+q}|m_i(z)z_j|^2$$

	Denote the vector $(m_1z_1,\cdots,m_{r}z_{n})^t=Y$. Then 
$F(z,\bar z)=Y^H\Pi Y$, where $\Pi$ is an $rn\times rn$ invertible matrix whose diagonal entries are all $\pm 1$.

	Let $I$ be the ideal of $\mathbb C[z_1,\cdots,z_n]$ generated by $m_1(z)$, $\cdots$,$m_{p+q}(z)$ and $I_{d+1}$ be $(d+1)$-graded ideal of $I$. 
Then for any $1\le i\le{p+q}$, $1\le j\le n$, $m_i(z)z_j\in I_{d+1}$ and $I_{d+1}$ is generated by $m_i(z)z_j$ as a vector space. Assume that $\dim I_{d+1}=k$. Then there exists $k$ linear independent polynomials $g_1$,$\cdots$,$g_k$ such that $I_{d+1}$ is generated by $g_1$,$\cdots$,$g_k$.  Denote $g=(g_1(z),\cdots,g_k(z))^t$. 

There exists an  $rn\times k$ matrix $A$ such that  $(m_1z_1,\cdots,m_{r}z_n)^t=Y=A(g_1(z),\cdots,g_k(z))^t$ with $\text{rank}(A)=k$. Then
$$F(z,\bar z)=Y^H\Pi Y=g^HA^H\Pi Ag.$$
 Hence,   $$ \text{rank}F =\text{rank}(A^H\Pi A)\ge \text{rank}(A^H)+\text{rank}(\Pi A)-rn=2\text{rank}(A^H)-rn=2k-rn$$

By Theorem \ref{M2}, $k=\dim(I_{d+1})\ge r_{(n-1)}|^{1}_0$.

On the other hand, it is clear that $\text{rank}A\le rn$.

Hence $2r_{(n-1)}|^{+1}_0-rn\le  R=\text{rank}(A^H\Pi A)\le rn $.
\end{proof}

As an application, we get the following result.

\begin{corollary}\label{general thm}
Let $M(z,\bar z)\in\mathbb C[z_1,\ldots,z_n,\bar z_1,\ldots,\bar z_n]$ be a bihomogeneous Hermitian polynomial  with $\text{rank}(M)=r\leq n-1$,
and let $R$ denote the rank of $M(z,\bar z)\|z\|^2_{\star}$. Then the following rank bounds hold
$$
rn-r(r-1)\leq R\leq rn.
$$
\end{corollary}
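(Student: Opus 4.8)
The plan is to deduce the corollary directly from Theorem~\ref{rank}, whose conclusion reads $2\,r_{(n-1)}|_0^1 - rn \le R \le rn$. The upper bound $R \le rn$ is already exactly what we want, so the entire task reduces to showing that, under the standing hypothesis $r \le n-1$, the lower bound $2\,r_{(n-1)}|_0^1 - rn$ simplifies to the explicit quantity $rn - r(r-1)$. This is purely a computation with the $(n-1)$-th Macaulay representation of $r$, followed by an application of the operator $|_0^1$.

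The first step is to pin down the shape of the $(n-1)$-th Macaulay representation of $r$ when $1 \le r \le n-1$. I would claim that in this range the greedy algorithm yields
$$
r = \binom{n-1}{n-1} + \binom{n-2}{n-2} + \cdots + \binom{n-r}{n-r},
$$
that is, $a_j = j$ for every index $n-r \le j \le n-1$ and $\delta = n-r$, so that each binomial coefficient contributes exactly $1$ and the $r$ terms sum to $r$. To justify this I would check the greedy choice at the top: since $r \le n-1 < n = \binom{n}{\,n-1\,}$ while $\binom{n-1}{\,n-1\,} = 1 \le r$, the leading term must be $\binom{n-1}{n-1}$; an easy downward induction shows the same phenomenon persists for the residuals $r-1, r-2, \dots$, because the inequality $r \le n-1$ keeps each residual strictly below the next available binomial coefficient. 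Uniqueness of the Macaulay representation then confirms this is the correct expansion.

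The second step is routine: applying $|_0^1$ term by term and using $a_j = j$ gives
$$
r_{(n-1)}|_0^1 = \sum_{j=n-r}^{n-1}\binom{j+1}{j} = \sum_{j=n-r}^{n-1}(j+1) = \sum_{i=n-r+1}^{n} i = \frac{r(2n-r+1)}{2}.
$$
Substituting into the lower bound of Theorem~\ref{rank} then yields
$$
2\,r_{(n-1)}|_0^1 - rn = r(2n-r+1) - rn = rn - r^2 + r = rn - r(r-1),
$$
which is precisely the claimed lower bound, completing the proof.

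The only genuine obstacle is the first step, namely rigorously justifying that the Macaulay representation collapses to a sum of $r$ ones exactly when $r \le n-1$; the role of the hypothesis $r \le n-1$ is entirely concentrated here, since it is what forces every $a_j$ to equal $j$. Once that structural fact is in hand, both the evaluation of $r_{(n-1)}|_0^1$ and the final algebraic simplification are short and mechanical.
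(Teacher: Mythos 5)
Your proposal is correct and follows essentially the same route as the paper: both deduce the corollary from Theorem~\ref{rank} by identifying the $(n-1)$-th Macaulay representation of $r$ as $\binom{n-1}{n-1}+\cdots+\binom{n-r}{n-r}$ when $r\le n-1$, computing $r_{(n-1)}|_0^1=rn-\tfrac{r(r-1)}{2}$, and substituting. The only difference is that you spell out the greedy/uniqueness justification of the representation, which the paper simply asserts.
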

\begin{proof}
When $r=r(M)\le n-1$,
$r_{(n-1)}=\binom{n-1}{n-1}+ \binom{n-2}{n-2}+\cdots+\binom{n-r}{n-r}$. Hence
$$r_{(n-1)}|^{1}_0=\binom{n}{n-1}+ \binom{n-1}{n-2}+\cdots+\binom{n-r+1}{n-r}=rn-\frac{r(r-1)}{2}.$$
By Theorem \ref{rank}, 
$$rn-r(r-1)=2r_{(n-1)}|^{1}_0-rn\le R \le rn.$$

\end{proof}

\section{Application to Sum of norm Squared}

Let  $M(z,\bar{z})$  be a bihomogeneous Hermitian polynomial of bidegree  $(d,d)$  with signature  $(p,q)$ . In this section, we establish results concerning the relationship between the rank of  $M(z,\bar{z})\|z\|^2$  and the signature  $(p,q)$  of $M$, focusing on the case where  $M(z,\bar{z})\|z\|^2$  admits a representation as a sum of squared norms.

Assume  $M(z,\bar{z}) = |m_1|^2 + \cdots + |m_p|^2 - |m_{p+1}|^2 - \cdots - |m_{p+q}|^2$  for some holomorphic polynomials  $m_1, \cdots, m_{p+q} \in \mathbb{C}[z_1, \dots, z_n]$ .We denote by  $I_{m^+}$  the ideal of  $\mathbb{C}[z_1, \dots, z_n]$  generated by  $m_1, \cdots, m_p$ , by  $I_{m^-}$  the ideal generated by  $m_{p+1}, \cdots, m_{p+q}$ , and by  $I_{m}$  the ideal generated by  $m_1, \cdots, m_{p+q}$.

To characterize the ideal-theoretic relationships underlying the factorization of such Hermitian polynomials, we appeal to a fundamental result from  $[\text{Da}]$ . This lemma elucidates the interaction between the ideals associated with the components of the factorization, serving as a key ingredient for our subsequent reasoning:

\begin{lemma}[\cite{Da}]\label{contain}Let  $M(z,\bar{z})$  be a bihomogeneous Hermitian polynomial of bidegree  $(d,d)$  on  $\mathbb{C}^n$ . If  $M(z,\bar{z})\|z\|^{2l} = \|h\|^2$  for some  $l \in \mathbb{N}$  and some holomorphic polynomials $h_1$, $\dots$, $h_k$ , then  $I_{m^-}(d+l) \subset I_{m^+}(d+l)$  and  $I_h(d+l) \subset I_{m^+}(d+l)$. Consequently,  $I_{m}(d+l) = I_{m^+}(d+l)$.\end{lemma}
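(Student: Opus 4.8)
The plan is to prove the containments $I_{m^-}(d+l)\subset I_{m^+}(d+l)$ and $I_h(d+l)\subset I_{m^+}(d+l)$ pointwise, by exploiting the algebraic identity $M\|z\|^{2l}=\|h\|^2$ evaluated at common zeros of the $m_i^+$. The key observation is that if $z_0$ is any point at which $m_1(z_0)=\cdots=m_p(z_0)=0$, then $M(z_0,\bar z_0)=-|m_{p+1}(z_0)|^2-\cdots-|m_{p+q}(z_0)|^2\le 0$, so that $M(z_0,\bar z_0)\|z_0\|^{2l}\le 0$. On the other hand, the right-hand side $\|h(z_0)\|^2\ge 0$. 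Forcing both sides to agree gives strong vanishing information at $z_0$.

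First I would fix $z_0\in\mathbb C^n$ with $z_0\ne 0$ lying in the common zero locus of $m_1,\dots,m_p$, i.e. $z_0\in V(I_{m^+})$. Since $\|z_0\|^{2l}>0$ for $z_0\ne 0$, the identity forces $-\sum_{i=p+1}^{p+q}|m_i(z_0)|^2\cdot\|z_0\|^{2l}=\sum_j|h_j(z_0)|^2\ge 0$, whence both sides must vanish: $m_{p+1}(z_0)=\cdots=m_{p+q}(z_0)=0$ and $h_1(z_0)=\cdots=h_k(z_0)=0$. This shows, at the level of varieties, that $V(I_{m^+})\subset V(I_{m^-})$ and $V(I_{m^+})\subset V(I_h)$, at least away from the origin; since all polynomials are homogeneous they vanish at $0$ automatically, so the inclusions of zero sets hold on all of $\mathbb C^n$. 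By the Nullstellensatz (available since $k=\mathbb C$ is algebraically closed), each $m_i^-$ and each $h_j$ lies in the radical $\sqrt{I_{m^+}}$.

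The next, more delicate, step is to upgrade these radical-membership statements to the exact-degree containments $I_{m^-}(d+l)\subset I_{m^+}(d+l)$ and $I_h(d+l)\subset I_{m^+}(d+l)$ claimed in the lemma. Radical membership only gives that some power of each generator lies in $I_{m^+}$, which is weaker than what we need in the specific graded piece of degree $d+l$. To close this gap I would return to the SOS identity and use it more quantitatively rather than only through its zero set: one writes $\|h\|^2+\sum_{i=p+1}^{p+q}|m_i|^2\|z\|^{2l}=\sum_{i=1}^{p}|m_i|^2\|z\|^{2l}$ as an identity of Hermitian polynomials of bidegree $(d+l,d+l)$, and compares the underlying holomorphic modules. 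Concretely, each $h_j$ and each product $m_i^- z^\alpha$ (with $|\alpha|=l$) is a holomorphic polynomial of degree $d+l$ appearing on the left, and the identity of coefficient matrices shows that the holomorphic span of the left-hand components is contained in the holomorphic span of the right-hand components $\{m_i^+ z^\alpha\}$, which by definition generates $I_{m^+}(d+l)$. This linear-algebra comparison of the Hermitian coefficient matrices is exactly the mechanism in D'Angelo's original argument, and it yields the two containments directly in degree $d+l$.

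The hard part, which I expect to be the main obstacle, is precisely this last upgrade from the variety-level inclusions to the graded containments in the single degree $d+l$: the Nullstellensatz alone is insufficient, and one must extract the containment from the positive-semidefinite structure of the SOS identity (for instance, by diagonalizing the coefficient matrix $\mathcal F$ of $M\|z\|^{2l}$ and reading off that its column space, spanned by the $h_j$, sits inside the column space spanned by the $m_i^+ z^\alpha$). Once both $I_{m^-}(d+l)\subset I_{m^+}(d+l)$ and $I_h(d+l)\subset I_{m^+}(d+l)$ are established, the final assertion $I_m(d+l)=I_{m^+}(d+l)$ is immediate: $I_m$ is generated by the $m_i^+$ together with the $m_i^-$, so $I_m(d+l)=I_{m^+}(d+l)+I_{m^-}(d+l)=I_{m^+}(d+l)$, the last equality using the first containment.
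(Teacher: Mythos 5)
The paper does not prove this lemma at all --- it is quoted verbatim from D'Angelo's work \cite{Da} --- so there is no internal proof to compare against; what matters is whether your reconstruction is sound, and it is. The part of your argument that actually carries the proof is the second one: writing $\|h\|^2+\sum_{i=p+1}^{p+q}|m_i|^2\|z\|^{2l}=\sum_{i=1}^{p}|m_i|^2\|z\|^{2l}$, expanding $\|z\|^{2l}=\sum_{|\alpha|=l}\binom{l}{\alpha}|z^\alpha|^2$ so that both sides become sums of squared norms of holomorphic polynomials of degree $d+l$, and invoking the standard fact that two equal positive semidefinite coefficient matrices $VV^H=WW^H$ have equal column spaces, hence the holomorphic spans of the two families coincide. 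Since $\mathrm{span}\{m_iz^\alpha: i\le p,\ |\alpha|=l\}=I_{m^+}(d+l)$, $\mathrm{span}\{h_j\}=I_h(d+l)$ and $\mathrm{span}\{m_iz^\alpha: i> p\}=I_{m^-}(d+l)$, both containments follow, and the final equality $I_m(d+l)=I_{m^+}(d+l)+I_{m^-}(d+l)=I_{m^+}(d+l)$ is immediate as you say. This is exactly D'Angelo's mechanism. Your first paragraph (evaluating at common zeros of the $m_i^+$ and applying the Nullstellensatz) is correct as far as it goes, but as you yourself note it only yields radical membership and cannot produce the degree-$(d+l)$ containment; it could be deleted without loss. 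One small point worth making explicit: bihomogeneity of $\|h\|^2$ in bidegree $(d+l,d+l)$ forces each $h_j$ to be homogeneous of degree $d+l$ (the diagonal bidegree-$(e,e)$ piece of $\|h\|^2$ is $\sum_j|h_j^{(e)}|^2$, which must vanish for $e\neq d+l$), which is needed before identifying $\mathrm{span}\{h_j\}$ with $I_h(d+l)$.
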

	
	Let $C_{m}^k=\binom{m}{k}$ for any integers $m\ge k\ge 1$.
	
	Theorem 2 in \cite{GK} characterizes an inequality for  $ p $  (from the signature  $(p, q)$ ) of a bihomogeneous polynomial  $ M(z, \overline{z}) $  of bidegree  $(d, d)$  and rank  $ r = p + q $ , when  $ M(z, \overline{z})\|z\|^{2l} $  is a sum of squared norm. It provides a lower bound for  $ p $  that depends on the bidegree  $ d $  of  $ M(z, \overline{z}) $ .  
	
	The result that follows offers a lower bound for  $ p $  with a more explicit formulation and enhanced tractability, and it does not depend on the bidegree of  $ M(z, \overline{z}) $ .

\begin{theorem}
	Let $M(z,\bar z)$ be a bihomogeneous polynomial on $\mathbb C^n$  with signature $(p,q) $ and rank $r=p+q$ on $\mathbb C^n$. If $M(z,\bar z)\|z\|^{2l}$ is sum of squared norms,  then 
	$$p\ge \frac{r_{(n-1)}\big|_{0}^{l}}{C_{n-1+l}^l}.$$
In particular, when $l=1$, 
	$p\ge \frac{r_{(n-1)}\big|_{0}^{1}}{n}$. 
\end{theorem}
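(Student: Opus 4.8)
The plan is to bound the rank $R$ of $M(z,\bar z)\|z\|^{2l}$ from both sides and compare. Suppose $M(z,\bar z)\|z\|^{2l}=\|h\|^2=\sum_{i=1}^R|h_i|^2$ with $h_i$ linearly independent. First I would locate everything inside the graded piece $I_{m^+}(d+l)$. By Lemma~\ref{contain}, since $M\|z\|^{2l}$ is a genuine sum of squared norms we have $I_h(d+l)\subset I_{m^+}(d+l)$ and $I_m(d+l)=I_{m^+}(d+l)$, so the $h_i$ all live in the $(d+l)$-graded component of the ideal $I_{m^+}$ generated by the positive components $m_1,\dots,m_p$. Because the $h_i$ are linearly independent, this gives the lower bound
$$
R\le \dim I_{m^+}(d+l).
$$
The ideal $I_{m^+}$ is generated by $p$ polynomials of degree $d$, so its degree-$(d+l)$ component is spanned by the products $m_i\cdot(\text{monomial of degree }l)$, of which there are at most $p\cdot C_{n-1+l}^{l}$; hence
$$
\dim I_{m^+}(d+l)\le p\,C_{n-1+l}^{l}.
$$

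Next I would produce a matching lower bound for $R$ via Macaulay's theorem applied to $I_{m^+}$. The key point is that the whole ideal $I_m$ (generated by all $r=p+q$ of the $m_i$) has a degree-$d$ component $I_m(d)$ of dimension $r$, since the $m_i$ are linearly independent of degree $d$. Applying the degree-independent Macaulay bound (Theorem~\ref{M2}) to the ideal $I_{m^+}$ — whose degree-$d$ dimension I must control — iterated $l$ times to pass from degree $d$ to degree $d+l$, should yield
$$
\dim I_{m^+}(d+l)\ge \big(\dim I_{m^+}(d)\big)_{(n-1)}\big|_0^{\,l}\ge r_{(n-1)}\big|_0^{\,l}.
$$
Here the operator $A_{(n-1)}|_0^{l}$ is exactly the $l$-fold composite of the single-step growth operator from Theorem~\ref{M2}, which is why the bidegree $d$ drops out entirely and only $n$ appears. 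Combining the two displays gives $r_{(n-1)}|_0^{l}\le \dim I_{m^+}(d+l)\le p\,C_{n-1+l}^{l}$, and dividing by $C_{n-1+l}^{l}$ yields the claimed bound $p\ge r_{(n-1)}|_0^{l}/C_{n-1+l}^{l}$. The $l=1$ case follows since $C_{n-1+1}^{1}=n$.

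The main obstacle I anticipate is justifying the lower bound $\dim I_{m^+}(d)\ge r$ rather than merely $\ge p$. A priori $I_{m^+}$ is generated only by $m_1,\dots,m_p$, so its degree-$d$ piece naively has dimension only $p$; but Lemma~\ref{contain} gives $I_{m^+}(d+l)=I_m(d+l)$ only in degree $d+l$, not in degree $d$. I would need to check whether the containment already holds at degree $d$ (which would force $\dim I_{m^+}(d)=r$ and make the Macaulay iteration start from $r$), or else iterate Macaulay's inequality starting from $\dim I_{m^+}(d)\ge p$ and argue that the growth operator still reaches $r_{(n-1)}|_0^{l}$ by the end — the latter requiring a careful monotonicity property of the operator $A\mapsto A_{(n-1)}|_0^{1}$ under $A\ge$ the relevant threshold. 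Resolving this starting-dimension issue, and confirming that $\big(A_{(n-1)}|_0^{1}\big)_{(n-1)}|_0^{1}=A_{(n-1)}|_0^{2}$ so that the $l$-fold iteration collapses cleanly into a single operator $A_{(n-1)}|_0^{l}$, is where the real work lies; the two-sided rank comparison itself is routine once these combinatorial facts about the degree-independent Macaulay operator are in hand.
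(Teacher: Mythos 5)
Your two-sided strategy is exactly the paper's, and your upper bound is correct: by Lemma~\ref{contain} the space $I_{m}(d+l)=I_{m^{+}}(d+l)$ is spanned by the products $m_i\cdot\mu$ with $1\le i\le p$ and $\mu$ a monomial of degree $l$, giving $\dim I_{m^{+}}(d+l)\le p\,C_{n-1+l}^{l}$. But the ``main obstacle'' you flag at the end is a genuine gap in the argument as written, and neither of your proposed workarounds is the right fix. You apply Theorem~\ref{M2} to the ideal $I_{m^{+}}$ and then assert $\big(\dim I_{m^{+}}(d)\big)_{(n-1)}\big|_{0}^{l}\ge r_{(n-1)}\big|_{0}^{l}$; this fails because $\dim I_{m^{+}}(d)=p$, not $r$, and Lemma~\ref{contain} gives the containment $I_{m^{-}}(d+l)\subset I_{m^{+}}(d+l)$ only in degree $d+l$, so there is no reason for it to hold in degree $d$ (indeed it typically does not). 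Iterating Macaulay from the starting value $p$ only yields $p_{(n-1)}\big|_{0}^{l}$, which is weaker than the claimed bound.

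The correct resolution, which is what the paper does, is to run the Macaulay lower bound on the \emph{full} ideal $I_{m}$ generated by all $r=p+q$ of the $m_i$: since these are linearly independent of degree $d$, $H_{I_{m}}(d)=r$, and Theorem~\ref{M2} iterated $l$ times gives $H_{I_{m}}(d+l)\ge r_{(n-1)}\big|_{0}^{l}$. Only \emph{after} this step do you invoke Lemma~\ref{contain}, whose conclusion $I_{m}(d+l)=I_{m^{+}}(d+l)$ transfers the lower bound to the space you can count: $r_{(n-1)}\big|_{0}^{l}\le H_{I_{m}}(d+l)=H_{I_{m^{+}}}(d+l)\le p\,C_{n-1+l}^{l}$. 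In other words, apply Macaulay to $I_{m}$ and the containment lemma to $I_{m^{+}}$, not the other way around. (Your other worry --- that the $l$-fold iterate of $A\mapsto A_{(n-1)}|_{0}^{1}$ collapses to $A_{(n-1)}|_{0}^{l}$ --- is fine: adding $1$ to each top index of a Macaulay representation yields again a valid Macaulay representation, so the iteration composes cleanly, together with the standard monotonicity of the operator. Also, the rank $R$ plays no role in this theorem; the comparison is purely between the two bounds on $\dim I_{m^{+}}(d+l)$.)
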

\begin{proof}
By Lemma \ref{contain}, $H_{I_{m}}(d+l)=H_{I_{m^{+}}}(d+l)$. From Theorem \ref{M2}, then 
 $$r_{(n-1)}\big|_{0}^{l}\le H_{I_{m}}(d+l)=H_{I_{m^{+}}}(d+l)\le pC_{n-1+l}^l.$$

Hence, we get the result.
\end{proof}

Now, we will introduce some results about the operator $m_{(n)}|_t^{s}$.
\begin{proposition}\label{b3}
For any positive integers $m$, $n$ and $l$, 
$$m_{(n)}|_0^{l}-m\ge m_{(n)}|_{-1}^{l-1}.$$ 
\end{proposition}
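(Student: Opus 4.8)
The plan is to write both sides entirely in terms of the $n$-th Macaulay representation $m = m_{(n)} = \binom{a_n}{n} + \cdots + \binom{a_\delta}{\delta}$ and then compare them index by index. Applying the operators to this representation, the left-hand side becomes
$$m_{(n)}|_0^l - m = \sum_{j=\delta}^n \left[\binom{a_j+l}{j} - \binom{a_j}{j}\right],$$
while the right-hand side is simply $m_{(n)}|_{-1}^{l-1} = \sum_{j=\delta}^n \binom{a_j+l-1}{j-1}$. It therefore suffices to establish the single-term inequality
$$\binom{a_j+l}{j} - \binom{a_j}{j} \ge \binom{a_j+l-1}{j-1}$$
for each $j$ with $\delta \le j \le n$, and then to sum over $j$.

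For the term-by-term step I would use Pascal's rule in the telescoping form $\binom{N}{j} - \binom{N-1}{j} = \binom{N-1}{j-1}$. Telescoping the difference gives
$$\binom{a_j+l}{j} - \binom{a_j}{j} = \sum_{i=1}^{l}\left[\binom{a_j+i}{j} - \binom{a_j+i-1}{j}\right] = \sum_{i=1}^{l}\binom{a_j+i-1}{j-1} = \sum_{i=0}^{l-1}\binom{a_j+i}{j-1}.$$
The top term ($i = l-1$) of this last sum is exactly the right-hand side $\binom{a_j+l-1}{j-1}$, and every remaining summand is a non-negative binomial coefficient. Discarding them yields the desired single-term inequality, and summing over $\delta \le j \le n$ completes the argument.

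The proof is essentially mechanical, so I do not expect a genuine obstacle; the only points requiring care are bookkeeping ones. First, one must confirm that $m$, being a positive integer, indeed admits a unique $n$-th Macaulay representation, so that both operators are well defined on it. Second, one should check the boundary case $\delta = 1$, where the lower index $j-1$ can equal $0$: there $\binom{a_j+i}{0} = 1$ for every $i$, the telescoped sum equals $l$, and the inequality $l \ge 1$ holds since $l$ is a positive integer by hypothesis. With these two checks in place, the non-negativity of the discarded terms is immediate and the conclusion follows.
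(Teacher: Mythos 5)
Your proposal is correct and follows essentially the same route as the paper: the paper's proof likewise reduces to the term-by-term identity $\binom{a_k+l}{k}-\binom{a_k}{k}=\binom{a_k+l-1}{k-1}+\cdots+\binom{a_k}{k-1}$ (your telescoped Pascal sum) and discards all but the top term. Your extra checks on well-definedness and the $j-1=0$ boundary are harmless additions but not a different argument.
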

\begin{proof}
Assume that $m_{(n)}=\binom{a_n}{n}+\cdots+\binom{a_\delta}{\delta}$, where  $\delta \ge 1$, $a_k \ge k$ for all $k \in \{\delta, \dots, n\}$, and $a_k > a_{k-1}$.

Since  $\binom{a_k+l}{k}-\binom{a_k}{k}=\binom{a_{k}+l-1}{k-1}+\binom{a_{k}+l-2}{k-1}+\cdots+\binom{a_{k}}{k-1}\ge \binom{a_{k}+l-1}{k-1}$ for any $k$, then $m_{(n)}|_0^{l}-m\ge m_{(n)}|_{-1}^{l-1}.$
\end{proof}
By similar computation, we get the following result.
\begin{proposition}\label{b4}
	For any positive integers $m$, $n$ and $k$, 
	$$m_{(n)}|^{k}_{-1}\ge (m-1)_{(n)}|^{k}_{-1}.$$ 
\end{proposition}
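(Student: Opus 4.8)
The plan is to prove the slightly cleaner statement that, for each fixed positive integer $k$, the map $F_n(m):=m_{(n)}|^{k}_{-1}$ is non-decreasing in $m$ (with the convention $F_n(0)=0$ for the empty representation), and to do this by induction on $n$. The engine is the greedy nature of the $n$-th Macaulay representation: if $a=a_n(m)$ is the largest integer with $\binom{a}{n}\le m$ and $r=m-\binom{a}{n}$, then $m_{(n)}=\binom{a}{n}+r_{(n-1)}$, where $r<\binom{a}{n-1}$ forces the top index of $r_{(n-1)}$ to be strictly below $a$. Applying the operator $|^{k}_{-1}$ to this splitting (which raises each top by $k$ and lowers each index by $1$) yields the recursion
$$F_n(m)=\binom{a+k}{n-1}+F_{n-1}(r).$$
It is this recursion that makes the induction run.

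For the base case $n=1$ one has $m_{(1)}=\binom{m}{1}$, so $F_1(m)=\binom{m+k}{0}=1$ for every $m\ge 1$ while $F_1(0)=0$; hence $F_1$ is non-decreasing. For the inductive step I would assume $F_{n-1}$ is non-decreasing and compare the representations of $m$ and $m-1$ through their top indices $a=a_n(m)$ and $a'=a_n(m-1)$. One always has $a'\le a$, and there are exactly two cases. If $a'=a$, then the two remainders satisfy $r'=r-1$ with $r\ge 1$, so the recursion gives $F_n(m)-F_n(m-1)=F_{n-1}(r)-F_{n-1}(r-1)\ge 0$ by the inductive hypothesis. If instead $a'<a$, this forces $\binom{a}{n}=m$, i.e.\ $r=0$, so that $F_n(m)=\binom{a+k}{n-1}$ and everything reduces to analyzing the predecessor $\binom{a}{n}-1$.

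The main obstacle, and the point where the "similar computation" to Proposition~\ref{b3} enters, is precisely this case $m=\binom{a}{n}$ (where necessarily $a\ge n+1$). Here I would first obtain the Macaulay representation of the predecessor by iterating Pascal's rule,
$$\binom{a}{n}-1=\binom{a-1}{n}+\binom{a-2}{n-1}+\cdots+\binom{a-n}{1},$$
and then apply $|^{k}_{-1}$ termwise to get $F_n(m-1)=\sum_{i=1}^{n}\binom{a-i+k}{n-i}$. On the other hand, telescoping Pascal's rule on the single term for $F_n(m)$ gives $\binom{a+k}{n-1}=\sum_{i=1}^{n-1}\binom{a-i+k}{n-i}+\binom{a-n+1+k}{0}$. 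Comparing the two expansions, the first $n-1$ summands coincide and the two trailing terms are each $\binom{\cdot}{0}=1$, so in fact $F_n(m)=F_n(m-1)$ in this case. Combining both cases closes the induction and yields $m_{(n)}|^{k}_{-1}\ge (m-1)_{(n)}|^{k}_{-1}$. The only delicate checks are that the displayed predecessor expansion really is a Macaulay representation (strictly decreasing tops, each at least its index, which holds because $a\ge n+1$) and that the index bookkeeping in the two telescopings is arranged so the trailing $\binom{\cdot}{0}$ terms match; once these are verified the argument is entirely elementary.
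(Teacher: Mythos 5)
Your proof is correct, and it is worth noting that the paper does not actually supply an argument for Proposition~\ref{b4}: it only says ``by similar computation,'' pointing back to the termwise binomial estimate used for Proposition~\ref{b3}. That earlier computation compares two operators applied to the \emph{same} Macaulay representation, whereas Proposition~\ref{b4} compares the representations of the two \emph{different} integers $m$ and $m-1$, so the analogy is not immediate and your write-up genuinely fills the gap. Your route --- induction on $n$ through the greedy recursion $F_n(m)=\binom{a+k}{n-1}+F_{n-1}(r)$, with the only nontrivial case being $m=\binom{a}{n}$, where the predecessor expansion $\binom{a}{n}-1=\binom{a-1}{n}+\binom{a-2}{n-1}+\cdots+\binom{a-n}{1}$ and the telescoped Pascal identity $\binom{a+k}{n-1}=\sum_{i=1}^{n-1}\binom{a-i+k}{n-i}+1$ show that $F_n(m)=F_n(m-1)$ --- is complete and elementary; all the delicate points you flag (validity of the predecessor representation when $a\ge n+1$, the strict drop of the top index of $r_{(n-1)}$ below $a$, and the matching trailing $\binom{\cdot}{0}$ terms) check out. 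What your approach buys, beyond actually proving the claim, is the sharper observation that equality holds exactly at the ``carry'' points $m=\binom{a}{n}$, which the paper's one-line assertion does not reveal; what it costs is a full induction where the intended argument was presumably a direct colexicographic comparison of the two representations.
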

Inspired by Corollary 1 in \cite{GK}, the following corollary states the condition on  the signature $(p, q)$ for $M(z,\bar z)\|z\|^{2l}$ to be a sum of norm squares. The inequality in the corollary generalizes both Corollary 1 in \cite{GK} and Main Theorem 1.1 in \cite{HL}. Notably, the proof avoids combinatorial graph analysis (used in \cite{HL}) and relies directly on ideal theory, making it more elementary. We further establish a rank bound for such polynomials:

\begin{corollary}\label{Cor}
	Let $M(z,\bar z)$ be a bihomogeneous polynomial of bidegree $(d,d)$ with the signature pair $(p,q)$ and suppose $M(z,\bar z)\|z\|^{2l}$ is a sum of norm squares. Then 
	$$q\le pC_{n-1+l}^l-p-p_{(n-1)}\big|^{l-1}_{-l}.$$
	In particular, when $l=1$,
	$$q\le p(n-1)-p_{(n-1)}\big|^{-1}_{-l}.$$
\end{corollary}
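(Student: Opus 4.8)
The plan is to sandwich $H_{I_m}(d+l)$ between a lower bound coming from the $r=p+q$ generators and an upper bound coming from the $p$ ``positive'' generators, and then to convert the resulting inequality into a bound on $q$ by means of the combinatorial Propositions \ref{b3} and \ref{b4}. The sandwich is essentially the content of the preceding theorem: Lemma \ref{contain} gives $H_{I_m}(d+l)=H_{I_{m^+}}(d+l)$; since $I_m$ is generated by the $r=p+q$ linearly independent degree-$d$ forms $m_1,\dots,m_{p+q}$ one has $H_{I_m}(d)=r$, and iterating Theorem \ref{M2} gives $H_{I_m}(d+l)\ge r_{(n-1)}|_0^l$; while $I_{m^+}=(m_1,\dots,m_p)$ has $(I_{m^+})_{d+l}$ spanned by the $p\,C_{n-1+l}^l$ products $m_iz^\alpha$ with $|\alpha|=l$, so $H_{I_{m^+}}(d+l)\le p\,C_{n-1+l}^l$. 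Together these yield the key inequality $r_{(n-1)}|_0^l\le p\,C_{n-1+l}^l$.

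Next I would reduce the statement to a purely combinatorial one. Writing $r=p+q$, the bound to be established takes the equivalent form $r+p_{(n-1)}|_{-1}^{l-1}\le p\,C_{n-1+l}^l$ (the correction operator being the one the propositions below actually supply), so in view of the key inequality it suffices to prove the degree-free combinatorial estimate $r_{(n-1)}|_0^l-r\ge p_{(n-1)}|_{-1}^{l-1}$ for all $r\ge p\ge 1$. I would obtain this in two moves: Proposition \ref{b3}, applied with $n$ replaced by $n-1$ and $m=r$, gives $r_{(n-1)}|_0^l-r\ge r_{(n-1)}|_{-1}^{l-1}$; and Proposition \ref{b4}, applied $q$ times to lower the argument one step at a time from $r$ down to $p$, gives $r_{(n-1)}|_{-1}^{l-1}\ge p_{(n-1)}|_{-1}^{l-1}$. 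Concatenating the two inequalities completes the reduction, and the case $l=1$ is the specialization $C_n^1=n$.

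I expect the delicate part to be pinning down the correction operator exactly. Proposition \ref{b3} is proved by retaining only the largest Pascal summand in $\binom{a_k+l}{k}-\binom{a_k}{k}=\sum_{i=0}^{l-1}\binom{a_k+i}{k-1}$, so the term it produces carries the lower shift $-1$; this is the natural form of the correction, namely $p_{(n-1)}|_{-1}^{l-1}$, and one must check that it is the sharpest estimate the method yields and that the passage from $r$ to $p$ via Proposition \ref{b4} is legitimate at each step. A secondary point to verify is the validity of iterating Theorem \ref{M2}: turning the one-step growth bound into $r_{(n-1)}|_0^l$ relies on the facts that the operator $A\mapsto A_{(n-1)}|_0^1$ is monotone and composes as $(A_{(n-1)}|_0^1)_{(n-1)}|_0^1=A_{(n-1)}|_0^2$, both of which hold because $A_{(n-1)}|_0^1$ is again in reduced Macaulay form.
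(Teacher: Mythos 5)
Your proposal is correct and follows essentially the same route as the paper: the chain $(p+q)_{(n-1)}|_0^l \le H_{I_{m}}(d+l)=H_{I_{m^+}}(d+l)\le p\,C_{n-1+l}^l$ via Lemma \ref{contain} and (iterated) Theorem \ref{M2}, followed by Propositions \ref{b3} and \ref{b4} to extract $(p+q)+p_{(n-1)}|_{-1}^{l-1}$ as a lower bound for $(p+q)_{(n-1)}|_0^l$. You are also right that the correction term the argument actually produces is $p_{(n-1)}|_{-1}^{l-1}$, consistent with the paper's own proof (the exponent placement in the corollary's displayed statement is a typo), and your explicit check of the monotonicity and composition of $A\mapsto A_{(n-1)}|_0^1$ fills in a step the paper leaves implicit.
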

\begin{proof}
By Theorem \ref{M2} and Lemma \ref{contain}, we have $(p+q)_{(n-1)}\big|_{0}^{l}\le H_{I_{m^{+}\oplus m^{-}}}(d+l)=H_{I_{m^{+}}}(d+l)\le pC_{n-1+l}^l$. 

From Proposition \ref{b3} and \ref{b4},  $ (p+q)_{(n-1)}\big|_{0}^{l}\ge (p+q)_{(n-1)}\big|_{-1}^{l-1}+(p+q) \ge p_{(n-1)}\big|_{-1}^{l-1}+(p+q)$. Therefore, 
$$q\le pC_{n-1+l}^l-p-p_{(n-1)}\big|_{-1}^{l-1}.$$

\end{proof}

In addition, we can get a new inequality to estimate the rank of $R$ which partially answer the SOS conjecture.

\begin{theorem}\label{SOS1}
Let $M(z,\bar z)$ be a bihomogeneous polynomial on $\mathbb C^n$ of bidegree $(d,d)$ with the signature pair $(p,q)$. Suppose $M(z,\bar z)\|z\|^{2l}=\|h\|^{2}$ and $R$ be the rank of $\|h\|^2$. Then 
$$(p+q)_{(n-1)}|_0^l-qC_{n-1+l}^l \le R \le pC_{n-1+l}^l.$$
\end{theorem}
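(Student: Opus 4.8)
The plan is to run a matrix-rank argument in the spirit of the proof of Theorem~\ref{rank}, but refined so as to track the positive and negative parts of the Hermitian form separately. First I would write $M=\sum_{i=1}^{p}|m_i|^2-\sum_{i=p+1}^{p+q}|m_i|^2$ with the $m_i$ linearly independent of degree $d$, and expand $\|z\|^{2l}=\sum_{|\alpha|=l}\binom{l}{\alpha}|z^\alpha|^2$ by the multinomial theorem. Setting $Y$ to be the vector whose entries are the degree-$(d+l)$ monomials $m_i z^\alpha$ (with $1\le i\le p+q$, $|\alpha|=l$), this gives $M(z,\bar z)\|z\|^{2l}=Y^{H}DY$, where $D$ is a real diagonal matrix of size $(p+q)C_{n-1+l}^{l}$ having $pC_{n-1+l}^{l}$ positive entries (those indexed by $i\le p$) and $qC_{n-1+l}^{l}$ negative ones. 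The entries of $Y$ lie in and in fact span $I_m(d+l)$, so fixing a basis $g=(g_1,\dots,g_k)^{t}$ of $I_m(d+l)$, where $k=H_{I_m}(d+l)$, I can write $Y=Ag$ for a matrix $A$ of full column rank $k$. Then $M(z,\bar z)\|z\|^{2l}=g^{H}(A^{H}DA)g$, and since the $g_i$ are linearly independent, $R=\operatorname{rank}(A^{H}DA)$.

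For the upper bound, note $R=\dim I_h(d+l)$, and Lemma~\ref{contain} gives $I_h(d+l)\subset I_{m^{+}}(d+l)$; since $I_{m^{+}}(d+l)$ is spanned by the $p\,C_{n-1+l}^{l}$ elements $m_i z^\alpha$ with $i\le p$, this yields $R\le pC_{n-1+l}^{l}$ at once.

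The lower bound is the crux, and is where the sharp constant $qC_{n-1+l}^{l}$ (rather than the weaker $(p+q)C_{n-1+l}^{l}$ that a naive Sylvester inequality produces) must be extracted. I would split $D=D^{+}-D^{-}$ into its positive and negative diagonal parts and invoke the subadditivity $\operatorname{rank}(X-Y)\ge\operatorname{rank}(X)-\operatorname{rank}(Y)$ with $X=A^{H}D^{+}A$ and $Y=A^{H}D^{-}A$. Here $\operatorname{rank}(Y)\le\operatorname{rank}(D^{-})=qC_{n-1+l}^{l}$, while $\operatorname{rank}(X)=\operatorname{rank}\bigl((D^{+})^{1/2}A\bigr)$ equals the dimension of the span of $\{m_i z^\alpha:\ i\le p,\ |\alpha|=l\}$, namely $\dim I_{m^{+}}(d+l)$. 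The key point is that Lemma~\ref{contain} forces $I_{m^{+}}(d+l)=I_m(d+l)$, so $\operatorname{rank}(X)=k$, giving $R\ge k-qC_{n-1+l}^{l}$.

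Finally I would bound $k=H_{I_m}(d+l)$ from below by iterating Theorem~\ref{M2}. Since the $m_i$ are linearly independent, $H_{I_m}(d)=p+q$, and applying Theorem~\ref{M2} a total of $l$ times yields $H_{I_m}(d+l)\ge(p+q)_{(n-1)}\big|_0^{l}$; this uses the monotonicity of the Macaulay operator together with the elementary composition identity $\bigl(x_{(n-1)}|_0^{1}\bigr)_{(n-1)}|_0^{1}=x_{(n-1)}|_0^{2}$, which holds because shifting every numerator of the $(n-1)$-th Macaulay representation by one again produces a valid such representation. Combining the two estimates gives $R\ge(p+q)_{(n-1)}\big|_0^{l}-qC_{n-1+l}^{l}$. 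The main obstacle is the lower bound: it is essential to recognise that one should not feed the full indefinite $D$ into a single rank inequality, but rather isolate the positive part, whose rank collapses to the full $k$ precisely because of the ideal equality $I_{m^{+}}(d+l)=I_m(d+l)$ supplied by Lemma~\ref{contain}.
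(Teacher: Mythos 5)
Your proposal is correct and follows essentially the same route as the paper: the upper bound comes from $I_h(d+l)\subset I_{m^+}(d+l)$ via Lemma~\ref{contain}, and the lower bound from the fact that the positive part spans all of $I_m(d+l)$ (again Lemma~\ref{contain}) together with the iterated Theorem~\ref{M2} estimate $H_{I_m}(d+l)\ge (p+q)_{(n-1)}|_0^{l}$, minus the $qC_{n-1+l}^{l}$ contribution of the negative part. Your matrix-rank formulation with $D=D^{+}-D^{-}$ merely makes explicit the middle inequality $H_{I_{f\oplus g}}(d+l)-H_{I_g}(d+l)\le R$ that the paper states without elaboration, so it is a somewhat more detailed rendering of the same argument.
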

\begin{proof}
	Assume $M(z,\bar z)=\sum_{i=1}^{p}\|f_i\|^2-\sum_{j=1}^{q}\|g_j\|^2$. By Lemma \ref{contain}, we get 
	 $$(p+q)_{(n-1)}|_0^l-qn\le H_{I_{f\oplus g}}(d+l)- H_{I_g}(d+l)\le R=H_{I_{h}}(d+l) \le H_{I_{f}}(d+l)  \le pn.$$
\end{proof}

Theorem \ref{SOS1} is particularly effective when $q$ (the negative signature component) is small. For example, if $q = 0$ (i.e., $M$ itself is a sum of squared absolute values), the lower bound simplifies to  $p_{(n-1)}\big|_0^l \le R$, which recovers Proposition 3 in \cite{GK}-a key result supporting the SOS conjecture for positive semi-definite Hermitian polynomials.

\section{Appendix}\label{App}

\begin{lemma}
	Suppose $m,k,s\geq 1$ and $A,B\geq 0$. If $A+B=\binom{m+k}{k}$, then $$A_{(m)}|_0^{s}+B_{(k)}|^{s}_{s}=\binom{m+k+s}{k+s}$$
	Here, we make the convention that $0_{(m)}|_0^{s}=0_{(k)}|^{s}_{s}=0$.
\end{lemma}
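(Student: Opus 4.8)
## Proof Proposal for the Appendix Lemma

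The plan is to prove the combinatorial identity
$$A_{(m)}\big|_0^{s}+B_{(k)}\big|^{s}_{s}=\binom{m+k+s}{k+s}$$
by induction on $s\geq 1$, using the Pascal-type recursion for binomial coefficients and reducing the claim for $s$ to the claim for $s-1$. First I would establish the base case $s=1$ directly, and then I would show that the identity for parameter $s-1$ forces the identity for $s$.

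For the base case $s=1$, I would write out the $m$-th Macaulay representation $A_{(m)}=\binom{a_m}{m}+\cdots+\binom{a_\delta}{\delta}$ and the $k$-th Macaulay representation of $B$, and compute $A_{(m)}\big|_0^{1}$ and $B_{(k)}\big|^{1}_{1}$ term by term. The key algebraic input is Pascal's rule $\binom{a_j+1}{j}=\binom{a_j}{j}+\binom{a_j}{j-1}$, which lets me split each shifted term. The crucial structural fact I would need is that the hypothesis $A+B=\binom{m+k}{k}$ tightly constrains how the ``top'' of the representation of $A$ meshes with the representation of $B$; concretely, the largest term $\binom{a_m}{m}$ in $A_{(m)}$ together with the leading behavior of $B$ must combine so that the lower-order correction terms telescope into a single binomial coefficient $\binom{m+k+1}{k+1}$. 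I expect this meshing to be the heart of the base case.

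For the inductive step, I would apply the operator shift one increment at a time: I expect a relation of the shape $A_{(m)}\big|_0^{s}=\bigl(A_{(m)}\big|_0^{s-1}\bigr)$-plus-correction and similarly for $B$, where the corrections, after invoking the inductive hypothesis $A_{(m)}\big|_0^{s-1}+B_{(k)}\big|^{s-1}_{s-1}=\binom{m+k+s-1}{k+s-1}$, combine via one more application of Pascal's rule to yield $\binom{m+k+s}{k+s}$. The convention $0_{(m)}\big|_0^{s}=0_{(k)}\big|^{s}_{s}=0$ handles the degenerate cases $A=0$ or $B=0$ cleanly and must be carried through each step.

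The main obstacle, I expect, is the bookkeeping in the base case: verifying that the Macaulay representations of $A$ and $B$ interlock correctly under the constraint $A+B=\binom{m+k}{k}$ so that the shifted sums amalgamate into a single binomial coefficient with no leftover terms. In particular, one must track precisely where the representation of $A$ terminates and where that of $B$ begins, since the identity hinges on these indices being complementary with respect to the full simplex count $\binom{m+k}{k}$. Once this interlocking is pinned down, the repeated use of Pascal's rule is routine. An alternative I would keep in reserve, should the induction prove cumbersome, is a direct double-counting or generating-function argument interpreting both sides as counts of lattice points or monomials; but the inductive Pascal approach seems most elementary and is the one I would carry out first.
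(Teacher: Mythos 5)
Your reduction to $s=1$ by induction on $s$ is sound and is exactly the paper's first step: after one application of the shift, $A':=A_{(m)}|_0^1$ and $B':=B_{(k)}|_1^1$ carry the shifted expressions as their own Macaulay representations (with respect to $m$ and $k+1$ respectively), so the statement for $s-1$ applies to the pair $(A',B')$. The degenerate cases $A=0$ or $B=0$ are also handled correctly by the stated convention.

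The gap is the base case $s=1$, which is where all the mathematical content lies and which you explicitly defer as ``the main obstacle'' without supplying the idea that resolves it. There is no straightforward term-by-term ``interlocking'' of the two representations that Pascal's rule alone will untangle: the $m$-th representation of $A$ and the $k$-th representation of $B$ are taken with respect to \emph{different} parameters, and the constraint $A+B=\binom{m+k}{k}$ does not translate into an explicit complementary form for the two index sequences that one could simply read off. The paper instead runs a second induction on $m+k$ (after checking $m=1$ and $k=1$ by hand): writing $\binom{m+k}{k}=\binom{m+k-1}{m}+\binom{m+k-1}{k}$, one observes that necessarily $A\geq\binom{m+k-1}{m}$ or $B\geq\binom{m+k-1}{k}$, since otherwise $A+B\leq\binom{m+k-1}{m}+\binom{m+k-1}{k}-2$. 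In the first case $\binom{m+k-1}{m}$ is the leading term of $A_{(m)}$, so $A-\binom{m+k-1}{m}$ and $B$ satisfy the hypothesis with $m$ replaced by $m-1$; in the second case one strips the leading term of $B_{(k)}$ and reduces $k$. The induction hypothesis plus a single Pascal identity then closes each case. Without this dichotomy (or an equivalent device for pinning down how the two representations mesh), your plan identifies the difficulty but does not yet constitute a proof.
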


\begin{proof}
	By induction, we only need to prove the case $s=1$.
	
	At first, we want to prove the case $A=0$ or $B=0$.
	
	when $A=0$, then $B_{(k)}=\binom{m+k}{k}$. It is clear that $A_{(m)}|_0^{+1}+B_{(k)}|^{+1}_{+1}=\binom{m+k+1}{k+1}$.
	when $B=0$,then $A_{(m)}=\binom{m+k}{m}$. It is easy to get $A_{(m)}|_{0}^s+B_{(k)}|_{s}^s=\binom{m+k+1}{m}=\binom{m+k+1}{k+1}$. 
	
	Now we only need to prove the case $A>0$ and $B>0$.We will prove by induction and first show that the lemma is true for $m=1$ or $k=1$.

	Suppose $k=1$ and $A+B=\binom{m+1}{1}=m+1$.
	
	If $1\leq A\leq m$, then
	$
	A_{(m)}=\binom{m}{m}+\binom{m-1}{m-1}+\cdots+\binom{m-A+1}{m-A+1}
	$ and $B_{(1)}=\binom{m-A+1}{1}$.
	Hence, \begin{eqnarray*}
		A_{(m)}|_0^{1}+B_{(k)}|^{1}_{1}&=&\binom{m+1}{m}+\binom{m}{m-1}+\cdots+\binom{m-A+2}{m-A+1}+\binom{m-A+2}{2}\\
		&=&\binom{m+2}{m}=\binom{m+2}{2}.
	\end{eqnarray*}

	Suppose $m=1$ and $A+B=\binom{1+k}{k}$. Then $1\leq B\leq k$ and hence $B_{(k)}=\binom{k}{k}+\binom{k-1}{k-1}+\cdots+\binom{k-B+1}{k-B+1}$ and $A_{(1)}=\binom{k-B+1}{1}$. Hence, 
	\begin{eqnarray*}
			A_{(1)}|_0^{1}+B_{(k)}|^{1}_{1}&=&\binom{k-B+2}{1}+\binom{k+1}{k+1}+\binom{k}{k}+\cdots+\binom{k-B+2}{k-B+2}\\
			&=&\binom{k+2}{2}. 
		\end{eqnarray*}

	Suppose $A+B=\binom{m+k}{k}=\binom{m+k}{m}=\binom{m+k-1}{m}+\binom{m+k-1}{k}$. Then, 
	either $A\geq\binom{m+k-1}{m}$ or $B\geq\binom{m+k-1}{k}$. Otherwise, $A+B\leq \binom{m+k-1}{m}-1+\binom{m+k-1}{k}-1$.

	If  $A\geq \binom{m+k-1}{m}$, then 
	$A_{(m)}=\binom{m+k-1}{m}+\binom{a_{m-1}}{m-1}+\cdots+\binom{a_\delta}{\delta}$. Since $(A-\binom{m+k-1}{m})+B=\binom{m-1+k}{k}$, by the induction hypothesis, we get
	{\footnotesize
		\begin{eqnarray*}
			\left(A-\binom{m+k-1}{m}\right)_{(m-1)}|_{0}^{1}+B_{(k)}|_{1}^{1}&=&\binom{m+k}{k+1}\\
			\Rightarrow\,\,\,\,\, \binom{a_{m-1}+1}{m}+\cdots+\binom{a_\delta+1}{\delta}+B_{(k)}|_{1}^{1}&=&\binom{m+k}{k+1}\\
			\Rightarrow\,\,\,\,\, \binom{m+k}{m}+\binom{a_{m-1}-1}{m-2}+\cdots+\binom{a_\delta-1}{\delta-1}+B_{(k)}|_{+1}^{+1}&=&\binom{m+k}{k+1}+\binom{m+k}{m}\\
			\Rightarrow\,\,\,\,\, A_{(m)}|_0^{1}+B_{(k)}|^{1}_{1}&=&\binom{m+k+1}{k+1}
		\end{eqnarray*}
	}
	
	If  $B\geq \binom{m+k-1}{k}$, then the $k$-th Macaulay's representation of $B$ is of the form 
	$B_{(k)}=\binom{m+k-1}{k}+\binom{b_{k-1}}{k-1}+\cdots+\binom{b_\epsilon}{\epsilon}$. Since $A+(B-\binom{m+k-1}{k})=\binom{m+k-1}{k-1}$, by the induction hypothesis, we get
	{\footnotesize
		\begin{eqnarray*}
			A_{(m)}|_0^{1}+\left(B-\binom{m+k-1}{k}\right)_{(k-1)}|^{1}_{1}&=&\binom{m+k}{k}\\
			\Rightarrow\,\,\,\,\, 	A_{(m)}|_0^{1}+\binom{b_{k-1}+1}{k}+\cdots+\binom{b_\epsilon+1}{\epsilon+1}&=&\binom{m+k}{k}\\
			\Rightarrow\,\,\,\,\, 	A_{(m)}|_0^{1}+\binom{m+k}{k+1}+\binom{b_{k-1}+1}{k}+\cdots+\binom{b_\epsilon+1}{\epsilon+1}&=&\binom{m+k}{k+1}+\binom{m+k}{k}\\
			\Rightarrow\,\,\,\,\, A_{(m)}|_0^{1}+B_{(k)}|^{1}_{1}&=&\binom{m+k+1}{k+1}
		\end{eqnarray*}
	}
\end{proof}

\end{document}